\documentclass[runningheads]{llncs}
\usepackage{graphicx} 
\usepackage{algorithm,algorithmicx,algpseudocode}

\usepackage{theorem}
\usepackage{amsmath, amssymb, mathtools}
\usepackage{graphics}
\usepackage{enumerate}
\usepackage{comment}
\usepackage{bm}
\usepackage{breqn}
\usepackage{cases}
\usepackage{eucal}
\usepackage{fancyvrb}
\usepackage{seqsplit}
\usepackage{autobreak}
\usepackage{color}
\usepackage{pb-diagram}
\makeatletter
\def\sbar{\accentset{{\cc@style\underline{\mskip16mu}}}}
\def\mbar{\accentset{{\cc@style\underline{\mskip32mu}}}}
\def\lbar{\accentset{{\cc@style\underline{\mskip48mu}}}}

\makeatother

\newcommand{\bP}{\mathbb{P}}

\newtheorem{Def}{Definition}[section]

\newtheorem{Prop}{Proposition}
\newtheorem{Lem}[Def]{Lemma}
\newtheorem{Cor}[Def]{Corollary}
\newtheorem{Te}[Def]{Definition}

\newtheorem{MMain}{Theorem}
\newtheorem{Conj}{Conjecture}
\newcommand{\C}{{\mathbb Z}}

\begin{document}

\title{Automorphism groups of hyperelliptic curves\\ of $2$-rank zero}
\author{Kohtaro Yamaguchi\inst{1} and Shushi Harashita\inst{2}
}

\authorrunning{K. Yamaguchi and S. Harashita}
\institute{
Graduate School of Environment and Information Sciences, Yokohama National University
79-7 Tokiwadai, Hodogaya-ku, Yokohama 240-8501 Japan
\email{yamaguchi-kohtaro-vx@ynu.jp} \and
Graduate School of Environment and Information Sciences, Yokohama National University
79-7 Tokiwadai, Hodogaya-ku, Yokohama 240-8501 Japan
\email{harasita@ynu.ac.jp}
}
\maketitle

\begin{abstract}
\noindent\quad 
In this paper, we determine the reduced automorphism groups of hyperelliptic curves of a small genus in characteristic $2$, when they are of $2$-rank $0$. Such a curve is an Artin-Schreier curve defined in the form $y^2-y=f(x)$ for a polynomial $f(x)$. After we clarify semidirect-product structures of the automorphism groups for an arbitrary genus,
we derive the detailed group structures for the reduced automorphism groups of the curves of a small genus, through computations using the computational algebra system Magma. With these experiments, we formulate two conjectures, which are analogues for our curves of the Oort conjecture on automorphism groups of generic principally polarized supersingular abelian varieties. 
\end{abstract}

\section{Introduction}
In this paper, we investigate the order and the group structure of automorphism group of hyperelliptic curves in characteristic $2$ of small genus which
is of $2$-rank 0
or is supersingular. 
This research is motivated in part by Oort's conjecture, which predicted that the automorphism group of generic supersingular principally polarized abelian variety of dimension $g\ge 2$ is $\{\pm 1\}\simeq \C_2$, where $\C_m$ denotes the cyclic group of order $m$.
As an analogue of this conjecture, one may ask whether
any generic $2$-rank $0$ (resp. supersingular) hyperelliptic curve in characteristic $2$ of genus $\ge 2$ is  isomorphic to $\C_2$.  Our theorems 
answers this question for small genera, which lead us to formulate refined conjectures.
Although Oort's conjecture has been resolved affirmatively recently (\cite{KYY} for $g=3$, \cite{KY} for even $g$ and \cite{Viehmann} for general $g$) unless $g=2,3$ and $p=2$,
the purpose of this paper is to highlight the interest of studying the case of a class of principally polarized abelian varieties that is, in one sense, narrower and, in another sense, broader than original Oort's case.

The classification of the autormorphism groups of curves contributes to the classification of curves. The representative prior studies are as follows.
In \cite[Section 8]{Igusa}, Igusa classified the automorphism groups of curves of genus two into seven types.
The case of genus-three hyperelliptic curves was done by Lercier and Ritzenthaler \cite[3.1]{LR}. The aim of this paper can be regarded as obtaining similar results when we restrict ourselves to the case of hyperelliptic curves in characteristic $p=2$ which is of $p$-rank zero or supersingular. Here the $p$-rank of a curve $C$ over an algebraically closed field $k$ in characteristic $p$ is the logarithm with base $p$ of the cardinality of $k$-valued points of the $p$-kernel of the Jacobian variety of $C$. We say that a curve $C$ is supersingular if its Jacobian variety is supersingular, i.e., is isogenous to a product of supersingular elliptic curves.
The $p$-rank of a supersingular curve is known to be zero, but the fact that $C$ has $p$-rank $0$ does not imply that $C$ is supersingular if the genus of $C$ is greater than or equal to $3$.


Let $K$ be an algebraically closed field of characteristic $2$.
We study the following Artin-Schreier curve
\[
    C:y^2-y=f(x),
\]
where
\[
f(x):=x(x^{2n}+a_{n-1}x^{2(n-1)}+\cdots +a_1x^2+a_0)
\]
with $a_i\in K$.
As we review in Section \ref{sec:hyperelliptic}, it is known that such curves $C$ are of $2$-rank $0$ and conversely any hyperelliptic curve
of $2$-rank $0$ over $K$ is realized in this way. Consider a coordinate-change of the form
\begin{equation}\label{eq:CoordinateTransform}
\left\{
\begin{array}{l}
x \mapsto X:=\alpha x+\beta, \\
y \mapsto Y:=y+\gamma_nx^n+\gamma_{n-1}x^{n-1}+\cdots +\gamma_1x+\gamma_0
\end{array}
\right.
\end{equation}
for $\alpha, \beta, \gamma_0,\ldots,\gamma_n\in K$ with $\alpha\ne 0$.
If $Y^2-Y=f(X)$ holds, then this coordinate-change defines an automorphism of $C$. Conversely, we shall see in Proposition \ref{prop:auto} that
any automorphism of $C$ is realized in this way.
We denote this automorphism by 
\[
(\alpha, \beta; \gamma_n, \ldots, \gamma_0).
\]
Let ${\rm Aut}(C)$ be the automorphism group of $C$.
The automorphism $\sigma$ defined by
\[\sigma:
\left\{
\begin{array}{l}
x \mapsto x, \\
y \mapsto y+1,
\end{array}
\right.
\]
i.e., $\sigma = (1, 0; 0, \ldots, 0, 1)$ is the hyperelliptic involution, since $C/\langle \sigma \rangle$ is isomorphic to ${\mathbb P}^1$.
The group ${\rm Aut}(C)/\langle \sigma \rangle$ is called {\it the reduced automorphism group}. We write it as ${\rm RA}(C)$.

For an automorphism $(\alpha, \beta; \gamma_n, \ldots, \gamma_0)$ of $C$, we obtain $\alpha^{2n+1}=1$ by examining the leading term of $Y^2 - Y = f(X)$. We have the following two homomorphisms
\[
\rho_{n}:\quad {\rm Aut}(C) \longrightarrow \mu_{2n+1} 
\]
sending $(\alpha,\beta;\gamma_n,\dots,\gamma_0)$ to $\alpha$, where $\mu_{2n+1}$ is the group of
$(2n+1)$-th roots of unity, and
\[
\tau_{n}:\quad {\rm Aut}(C) \longrightarrow M_{n}
\]
sending $(\alpha,\beta;\gamma_n,\dots,\gamma_0)$ to $(\alpha,\beta)$,
where 
$M_n$ is the group of affine transformations $(u,v): x\mapsto u x + v$
for $u,v\in K$.

The main results of this paper are Theorems \ref{thm:1}, \ref{thm:2}, and \ref{thm:3} below. The first theorem reveals the group extension structure of 
${\rm Aut}(C)$.
\begin{MMain}\label{thm:1}
\begin{enumerate}
\item[\em (1)] We have ${\rm Ker}\tau_n = \langle \sigma \rangle$ and therefore 
\[
{\rm RA}(C):={\rm Aut}(C)/\langle \sigma \rangle \simeq {\rm Im}\tau_n. 
\]
\item[\em (2)] Put $U_n:=\left\{
\left. \begin{pmatrix}
    1 & \beta \\
    0 & 1
\end{pmatrix}
\ \right| \ (1, \beta)\in {\rm Im}\tau_n
\right\}$. Then we have
\[
{\rm Im}\tau_n \simeq \left\{
\left. \begin{pmatrix}
    \alpha & \beta \\
    0 & 1
\end{pmatrix}
\ \right| \  (\alpha, \beta)\in {\rm Im}\tau_n
\right\}
\simeq U_n\rtimes \C_{\#{\rm Im}\rho_n}. 
\]
Moreover, since $U_n$ is an elementary abelian $2$-group, there exists a non-negative integer $\ell$ such that $U_n\simeq \C_2^{\ell}$;
hence
\[
{\rm RA}(C)\simeq \C_2^{\ell}\rtimes \C_{\#{\rm Im}\rho_n}. 
\]
\item[\em (3)] If $n$ is odd with $n\geq 3$, then we have ${\rm Ker}\rho_n = \langle \sigma \rangle$, equivalently $\# U_n = 1$.
\item[\em (4)] We have
\[
{\rm Aut}(C) \simeq {\rm Ker}\rho_n\rtimes \C_{\#{\rm Im}\rho_n}.
\]
In particular, by (3), if $n$ is odd with $n\geq 3$, then we have
${\rm Aut}(C)\simeq \C_2\rtimes \C_{\#{\rm Im}\rho_n}$.
\end{enumerate}
\end{MMain}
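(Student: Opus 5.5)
We will work directly with the explicit form of automorphisms, using Proposition~\ref{prop:auto} that every automorphism is $(\alpha,\beta;\gamma_n,\dots,\gamma_0)$. The composition law is read off by substitution: composing two such coordinate changes gives $x\mapsto \alpha\alpha' x+(\alpha\beta'+\beta)$ (up to order conventions). Hence $\tau_n$ and $\rho_n$ are homomorphisms, and $\tau_n$ lands in the affine group, realized as matrices $\begin{pmatrix}\alpha&\beta\\0&1\end{pmatrix}$.

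\emph{Part (1).} Suppose $\tau_n$ sends an automorphism to $(1,0)$. Then $Y=y+g(x)$ with $g=\sum\gamma_i x^i$, and $Y^2-Y=f(x)$ forces $g^2-g=0$ in $K[x]$. So $g$ is a constant $\gamma_0$ with $\gamma_0\in\{0,1\}$, which means the automorphism is the identity or $\sigma$. Conversely $\sigma\in\operatorname{Ker}\tau_n$. The isomorphism ${\rm RA}(C)\simeq{\rm Im}\tau_n$ then follows from the first isomorphism theorem.

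\emph{Part (2).} The restriction of $\rho_n$ to ${\rm Im}\tau_n$, namely $(\alpha,\beta)\mapsto\alpha$, has kernel $U_n$ (identified with the translations $(1,\beta)$). Its image is a finite subgroup of $\mu_{2n+1}$, hence cyclic of odd order $m=\#{\rm Im}\rho_n$. The translations satisfy $(1,\beta)^2=(1,2\beta)=\mathrm{id}$ in characteristic $2$, and they commute, so $U_n$ is an elementary abelian $2$-group. It is finite because it embeds in the finite group ${\rm RA}(C)$ of a curve of genus $\ge2$, so $U_n\simeq\mathbb Z_2^\ell$. To split the sequence $1\to U_n\to{\rm Im}\tau_n\to\mathbb Z_m\to1$, we note that $\gcd(\#U_n,m)=1$ since $m$ is odd. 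Schur--Zassenhaus then gives a complement. Alternatively, take any preimage $g$ of a generator of $\mathbb Z_m$; then $g^m\in U_n$, and replacing $g$ by $g^{2^\ell k}$ with $2^\ell k\equiv1\pmod m$ yields an element of exact order $m$.

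\emph{Part (4).} The same argument applies to $1\to{\rm Ker}\rho_n\to{\rm Aut}(C)\to\mathbb Z_m\to1$. Here ${\rm Ker}\rho_n$ is an extension of $U_n$ by $\langle\sigma\rangle$, so it is a $2$-group. Its order is therefore coprime to the odd number $m$, and Schur--Zassenhaus (or the explicit power trick) again gives a splitting.

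\emph{Part (3), the main computation.} Take an automorphism $(1,\beta;\gamma_n,\dots,\gamma_0)$ and expand $f(x+\beta)-f(x)=g(x)^2-g(x)$ with $g=\sum\gamma_ix^i$. The plan is to compare coefficients from the top.
\begin{itemize}
\item The right-hand side has degree $2n$ when $\gamma_n\neq0$. On the left, the $x^{2n}$ coefficient is $\beta\binom{2n+1}{1}=\beta$, giving $\gamma_n^2=\beta$.
\item Next, $f$ contains only odd powers $x^{2j+1}$. In $(x+\beta)^{2j+1}-x^{2j+1}$, the coefficient of $x^{k}$ is $\binom{2j+1}{k}\beta^{2j+1-k}$. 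For $n$ odd, look at the degree $2n-1$ term and other odd degrees near the top. The right-hand side has odd-degree part $-\sum\gamma_{i}x^{i}$ for odd $i\le n$, which vanishes in degrees above $n$. Meanwhile the left-hand side has odd-degree coefficients in degrees $>n$ determined by $\beta$ and the $a_j$, and these are forced to vanish.
\item In particular, the coefficient of $x^{2n-1}$ is $\binom{2n+1}{2}\beta^2+a_{n-1}\binom{2n-1}{0}\cdot0$-type terms, equal to $n(2n+1)\beta^2=n\beta^2$. This must be zero, since $2n-1>n$ when $n\ge2$.
\end{itemize}
When $n$ is odd this forces $\beta=0$. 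This is exactly where the parity of $n$ enters, and why $n\ge3$ is needed so that $2n-1>n$ and the even/odd comparison is legitimate. Then $U_n$ is trivial and, by (1), ${\rm Ker}\rho_n=\langle\sigma\rangle$. The delicate step is bookkeeping the binomial coefficients mod $2$ correctly in this comparison.
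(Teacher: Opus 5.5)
Your proposal is correct and follows essentially the same route as the paper: (1) comes from $g^2-g=0$ in $K[x]$; (2) and (4) come from the odd order of ${\rm Im}\rho_n$ set against a $2$-group kernel, together with Schur--Zassenhaus; and (3) comes from the $x^{2n-1}$-coefficient $\binom{2n+1}{2}\beta^2=n\beta^2$, where you deduce ${\rm Ker}\rho_n=\langle\sigma\rangle$ directly from $\beta=0$ and part (1) instead of recomputing all the $\gamma_i$ as the paper does. One small loose end is that your finiteness argument for $U_n$ appeals to genus $\ge 2$ and so does not literally cover $n=1$, but there finiteness is immediate from the explicit relation $\beta^4=\beta$ (from $\gamma_1^2=\beta$ and $\gamma_1=\beta^2$).
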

As a result, we found that these groups are isomorphic to groups constructed as direct products and semidirect products of several cyclic groups. In the subsequent, Theorem \ref{thm:2}, we compute, using the computer algebra system Magma, a Gr\"obner basis of the ideal generated by the conditions under which the coordinate transformation \eqref{eq:CoordinateTransform} becomes an automorphism. For $n=1,\ldots ,6$, this yields the detailed group structure and defining conditions of the reduced automorphism group $\mathrm{RA}(C)$, as well as the order of the automorphism group $\mathrm{Aut}(C)$ (cf. \cite{Yamaguchi}). The results are as follows.

\begin{MMain}\label{thm:2}
For $n=1,\ldots, 6$, we have the table of
the group structure of ${\rm RA}(C)$
and the order of ${\rm Aut}(C)$.
\begin{center}
\begin{tabular}{|c|l|c|}
\hline
$n$ & ${\rm group\ structure\ of\  RA}(C)$ & $\#{\rm Aut}(C)$ \\
\hline
$1$ & $\C_2^2\rtimes \C_3\simeq A_4$ & $24$ \\
\hline
$2$ & $\C_2^4 \ {\rm if\ }a_1\neq0$ & $32$ \\
\cline{2-3}
& $\C_2^4\rtimes \C_5$\ \ \rm{if} $a_1=0$ & $160$ \\
\hline
$3$ & $\C_1\ \ {\rm if\ }a_2^8+a_2+a_1^4\neq0\ {\rm or}\ a_2^{12}+a_2a_1^2+a_0^4\neq0$ & $2$ \\
\cline{2-3}
    & $\C_7 \ \  {\rm if\ }a_2^8+a_2+a_1^4=a_2^{12}+a_2a_1^2+a_0^4=0$ & $14$ \\
\hline
$4$ & 
$\C_1$\ \ {\rm if\ }$a_3\ne0$ & $2$ \\
\cline{2-3}
&$\C_2^6 \ \  {\rm if\ }a_3=0\ {\rm and }\ a_2\ne0$ & $128$ \\
\cline{2-3}
&$\C_2^6\rtimes \C_3 \ \  {\rm if\ }a_3=a_2=0\ {\rm and}\ a_1\neq0$ & $384$ \\
\cline{2-3}
&$\C_2^6\rtimes \C_9 \quad {\rm if\ }a_3=a_2=a_1=0$ & $1152$ \\
\hline
$5$ & $\C_1$\ \ 
\begin{tabular}{l}
{\rm if\ } $a_3\neq0 $\ {\rm or}\ $a_4+a_2^4\neq0$\ {\rm or}\ $a_4^8+a_1^2\neq0$\\
\phantom{aa }{\rm or}\ $a_4^{40}+a_4^{18}+a_4a_2^2+a_0^8\neq0$ 
\end{tabular} 
 & $2$\\
\cline{2-3}
&$\C_{11}\ \ {\rm if\ }a_3=a_4+a_2^4=a_4^8+a_1^2=a_4^{40}+a_4^{18}+a_4a_2^2+a_0^8=0$ & $22$ \\
\hline
$6$ & $\C_1$\ \ 
\begin{tabular}{l}
         {\rm if\ either}\ $a_5(a_3^2+a_5^6)=0$,\ $a_5a_3^4+a_5^{13}+1\neq0$,\ {\rm or}\ $t\neq0$ \\
\phantom{a }{\rm is true, and at\ least\ of}
         \\
         \phantom{a}$\ a_5,\ a_3,\ a_4^4+a_2^2,\ a_4+a_1^{16},\ a_4^{24}+a_4^4a_1^8+a_4a_1^4+a_0^8$\ {\rm is\ not\ 0}\end{tabular} & $2$ \\
\cline{2-3}
& $\C_2\ \ \begin{tabular}{l}${\rm if\ }a_5(a_3^2+a_5^6)\neq0\ {\rm and}\ a_5a_3^4+a_5^{13}+1=t=0, $ {\rm and at\ least\ of}\\
    \quad $\ a_5,\ a_3,\ a_4^4+a_2^2,\ a_4+a_1^{16},\ a_4^{24}+a_4^4a_1^8+a_4a_1^4+a_0^8\ {\rm is\ not\ 0}$\end{tabular}$ & $4$ \\
\cline{2-3}
& $\C_{13}\ \ {\rm if\ }a_5=a_3=a_4^4+a_2^2=a_4+a_1^{16}=a_4^{24}+a_4^4a_1^8+a_4a_1^4+a_0^8=0$ & $26$ \\
\hline
\end{tabular}
\end{center}
Here
\begin{eqnarray}\label{eq:20260412}
t&=&a_1^8+a_1^4a_3^{24}+a_1^4a_5^{72}+a_1^4a_5^{46}+a_1^4a_5^{20}+a_2^8a_5^8+a_2^2a_3^{28}+a_2^2a_5^{84}+a_2^2a_5^{71}\nonumber\\&&
+a_2^2a_5^{58}+a_2^2a_5^{45}+a_2^2a_5^{32}+a_2^2a_5^{19}+a_2^2a_5^6+a_3^{30}a_4+a_3^{22}+a_3^4a_4+a_3^2a_4a_5^{84}\nonumber\\&&
+a_3^2a_4a_5^{71}+a_3^2a_4a_5^{58}+a_3^2a_4a_5^{45}+a_3^2a_4a_5^{32} +a_3^2a_4a_5^{19}+a_3^2a_4a_5^6+a_3^2a_5^{60}\nonumber\\&&
+a_3^2a_5^{47}+a_3^2a_5^8+a_4^8a_5^{24}+a_4a_5^{12}+a_5^{40}+a_5^{14}. 
\end{eqnarray}

\end{MMain}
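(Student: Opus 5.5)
The plan is to reduce everything to solving explicit polynomial systems in $(\alpha,\beta)$ and then read off the group structure with Theorem \ref{thm:1}. Substitute the transformation \eqref{eq:CoordinateTransform} into $Y^2-Y=f(X)$, i.e. require
\[
(\gamma_n x^n+\cdots+\gamma_0)^2+(\gamma_n x^n+\cdots+\gamma_0)=f(\alpha x+\beta)-f(x),
\]
and compare coefficients of $x^0,\dots,x^{2n+1}$. The leading coefficient gives $\alpha^{2n+1}=1$. The even-degree coefficients $x^{2j}$ with $j\ge 1$ express $\gamma_j^2+(\text{contribution of }\gamma_{2j})$ in terms of $\alpha,\beta,a_i$, so the $\gamma_j$ can be eliminated successively from the top down. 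The odd-degree coefficients $x^{2j+1}$ then give polynomial conditions in $\alpha,\beta,a_i$ alone, and the constant term determines $\gamma_0$ up to adding $1$, which is the hyperelliptic involution.

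By Theorem \ref{thm:1}(1) it suffices to count the solutions $(\alpha,\beta)$, i.e. to compute ${\rm Im}\,\tau_n$. Then $\#{\rm Aut}(C)=2\,\#{\rm Im}\,\tau_n$, and ${\rm RA}(C)\simeq \C_2^{\ell}\rtimes\C_{\#{\rm Im}\rho_n}$ with $2^\ell=\#U_n$ by Theorem \ref{thm:1}(2).

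For each $n\le 6$ we compute, in Magma, a Gr\"obner basis (lex order with $\alpha,\beta$ last) of the ideal generated by these conditions over $\mathbb{F}_2[a_0,\dots,a_{n-1}]$, and then split into cases according to the vanishing of leading coefficients.

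\begin{itemize}
\item \textbf{Unipotent part, $\alpha=1$.} The conditions become an additive ($2$-linearized) polynomial in $\beta$. For example, for $n=1$ one gets $\beta^4+\beta=0$, giving $\C_2^2$. For $n=2$ one gets a degree-$16$ separable additive polynomial, giving $\C_2^4$. For $n=4$ one gets $\C_2^6$ exactly when $a_3=0$ and only $\beta=0$ otherwise. For odd $n\ge3$, Theorem \ref{thm:1}(3) already gives $U_n$ trivial. Separability is checked from the linear term of the additive polynomial.
\item \textbf{Image of $\rho_n$.} For each divisor $d$ of $2n+1$ we decide for which $a_i$ there is a primitive $d$-th root $\alpha$ with a compatible $\beta$. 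Eliminating $\beta$ gives the stated conditions: for instance $a_1=0$ for $\C_5$ when $n=2$, and the conditions $a_2=a_1=0$ versus $a_3=a_2=0$ for $\C_9$ versus $\C_3$ when $n=4$. For $n=3,5,6$, where $2n+1$ is prime, eliminating $\beta$ yields precisely the displayed polynomial conditions on the $a_i$.
\item \textbf{The new $\C_2$ when $n=6$.} The case $\C_2$ for $n=6$ comes from $\alpha=1$, $\beta\neq0$. Here $U_6$ is not covered by Theorem \ref{thm:1}(3), and the resultant in $\beta$ produces the conditions $a_5(a_3^2+a_5^6)\ne0$, $a_5a_3^4+a_5^{13}+1=0$, and $t=0$.
\end{itemize}

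The main obstacle is the $n=6$ case. The elimination produces the large polynomial $t$, and one must make sure the case division is exhaustive and consistent. In particular, we must check that the $\C_{13}$ locus and the $\C_2$ locus are disjoint; this is evident, since the $\C_{13}$ locus forces $a_5=0$ while the $\C_2$ locus forces $a_5\ne0$. We must also check that at most one nonzero $\beta$ survives, which I would verify by showing that the Gr\"obner basis contains a polynomial in $\beta$ of the form $\beta(\beta+c)$ after saturating by $a_5(a_3^2+a_5^6)$.

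Finally, the semidirect product structures in the table, e.g. $A_4$ for $n=1$, follow from Theorem \ref{thm:1}(2) once the orders are known. The count $\#{\rm Aut}(C)=2\cdot 2^\ell\cdot\#{\rm Im}\rho_n$ then fills in the last column.
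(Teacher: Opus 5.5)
Your proposal follows the paper's proof essentially step for step: reduce via Theorem~\ref{thm:1} to counting $\alpha\in{\rm Im}\rho_n$ and $\beta\in U_n$; read off from Magma Gr\"obner bases the conditions $P(a)(\alpha+1)=0$ (and $a_1^2(\alpha^3+1)=0$ for $n=4$) together with separable additive polynomials in $\beta$; use Theorem~\ref{thm:1}(3) for odd $n$; and for $n=6$ obtain the $\C_2$ case from $\beta^2+a_5(a_3^2+a_5^6)\beta=0$, $(a_5a_3^4+a_5^{13}+1)\beta=0$ and $t\beta=0$. Two small corrections: the $\C_9$ locus for $n=4$ is $a_3=a_2=a_1=0$, not just $a_2=a_1=0$; and $\C_2^2\rtimes\C_3\simeq A_4$ does not follow from the orders alone, but it does follow once you note that conjugation by $(\alpha,b)$ sends $(1,\beta)$ to $(1,\alpha\beta)$, which is a nontrivial action.
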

The case of genus $1$ is well-known.
Since a genus-2 curve is supersingular if and only if it is of $p$-rank $0$ (cf. \cite[Lem.\ 1.1 (i)]{IKO}),
the result for genus $2$ gives a counter-example of original Oort conjecture, but this has already been proved by Ibukiyama \cite{Ibukiyama} in a different way. 
In the theorem, we have compiled these cases into a single table so that the reader can compare them.

By this theorem,  generic hyperelliptic curves of $2$-rank $0$ (in characteristic $2$) is $\{\pm 1\}$ have the automorphism group $\{\pm1\}$ (an analogue of Oort conjecture is true) for
$2<g\leq 6$. It would be natural to expect

\begin{Conj}
The automorphism group of a generic hyperelliptic curve of genus $g$ and $2$-rank $0$ in characteristic $2$ is $\{\pm 1\}$ if $g > 2$.
\end{Conj}

Finally we study supersingular curves
in characteristic $2$ of  genus $\leq 9$. Scholten-Zhu \cite{Scholten} gave
a defining equation of such a curve as in the table below.
We call them Artin-Schreier curves of Scholten-Zhu type.
\begin{center}
\begin{tabular}{|c|l|}
\hline
{\rm genus} $g$ & {\rm Artin-Schreier curves of Scholten-Zhu type} $C_g^{SZ}$ \\
\hline
$1$ & $y^2-y=x^3$ \\
$2$ & $y^2-y=x^5+c_3x^3$ \\
$3$ & {\rm none} \\
$4$ & $y^2-y=x^9+c_5x^5+c_3x^3$ \\
$5$ & $y^2-y=x^{11}+c_3x^3+c_1x$ \\
$6$ & $y^2-y=x^{13}+c_3x^3+c_1x$ \\
$7$ & {\rm none} \\
$8$ & $y^2-y=x^{17}+c_9x^9+c_5x^5+c_3x^3$ \\
$9$ & $y^2-y=x^{19}+c^8x^9+c^3x$ \\
\hline
\end{tabular}
\end{center}
Remark that there is no supersingular hyperelliptic curve in characteristic $2$ if the genus is of the form $2^k-1$
for some integer $k\ge 2$, see \cite[Thm.\ 1.2]{SZ:hyperell}.

We computed the automorphism groups of Artin-Schreier curves of Scholten-Zhu type:
\begin{MMain}\label{thm:3}
The group structure of the reduced automorphism group ${\rm RA}(C_g^{SZ})$ and the order of the automorphism group ${\rm Aut}(C_g^{SZ})$, under the coordinate transformation $(\ast)$ of the curve $C_g^{SZ}$ are as follows. 
\begin{center}
\begin{tabular}{|c|l|c|}
\hline
{\rm genus} $g$ & {\rm group structure of} ${\rm RA}(C^{SZ}_{g})$ & $\#{\rm Aut}(C^{SZ}_g)$ \\
\hline
$1$ & $\C_2^2\rtimes \C_3\simeq A_4$ & $24$\\
\hline
$2$ & $\C_2^4 \quad {\rm if\ }c_3\neq0$ & $32$\\
\cline{2-3}
& $\C_2^4\rtimes \C_5 \quad {\rm if\ }c_3=0$ & $160$ \\
\hline
$4$ & $\C_2^6 \quad {\rm if\ }c_5\neq0$ & $128$\\
\cline{2-3}
& $\C_2^6\rtimes \C_3 \quad {\rm if\ }c_5=0\ {\rm and}\ c_3\neq0$ & $384$\\
\cline{2-3}
& $\C_2^6\rtimes \C_9 \quad {\rm if\ }c_5=c_3=0$ & $1152$ \\
\hline
$5$ & $\C_1 \quad {\rm if\ }c_3\neq0\ {\rm or}\ c_1\neq0$ & $2$\\
\cline{2-3}
& $\C_{11} \quad {\rm if\ }c_3=c_1=0$ & $22$ \\
\hline
$6$ & $\C_1 \quad {\rm if\ }c_3\neq0\ {\rm or}\ c_1\neq0$ & $2$\\
\cline{2-3}
& $\C_{13} \quad {\rm if\ }c_3=c_1=0$ & $26$ \\
\hline
$8$ & $\C_2^8 \quad {\rm if\ }c_9\neq0\ {\rm or}\ c_5\neq0\ {\rm or}\ c_3\neq0$ & $512$\\
\cline{2-3}
& $\C_2^8\rtimes \C_{17} \quad {\rm if\ }c_9=c_5=c_3=0$ & $8704$ \\
\hline
$9$ & $\C_1 \quad {\rm if\ }c\neq0$ & $2$\\
\cline{2-3}
& $\C_{19} \quad {\rm if\ }c=0$ & $38$ \\
\hline
\end{tabular}
\end{center}
\end{MMain}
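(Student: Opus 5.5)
We will deduce Theorem \ref{thm:3} from Theorem \ref{thm:1} together with a direct analysis of the automorphism equations, in the same spirit as Theorem \ref{thm:2}. By Theorem \ref{thm:1}(1),(2), ${\rm RA}(C_g^{SZ})\simeq U_n\rtimes \C_{\#{\rm Im}\rho_n}$ with $n=g$. So it suffices to determine two things for each $g$: the set of $\beta$ with $(1,\beta)\in{\rm Im}\tau_n$, and the set of $\alpha\in\mu_{2n+1}$ that occur. Then $\#{\rm Aut}=2\cdot\#U_n\cdot\#{\rm Im}\rho_n$.

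For $g=1,2,4,5,6$ the curve $C_g^{SZ}$ is a specialization of the family in Theorem \ref{thm:2}. The genus-$g$ curve $y^2-y=x^{2g+1}+\sum c_ix^i$ becomes $y^2-y=x f_0(x^2)$ after killing the even-degree terms, which are already absent here. So I will read off ${\rm RA}$ by substituting the coefficients into the conditions of that table.
\begin{itemize}
\item $g=2$: $a_1=c_3$.
\item $g=4$: $a_3=0$, $a_2=c_5$, $a_1=c_3$.
\item $g=5$: $a_4=a_3=a_2=0$, $a_1=c_3$, $a_0=c_1$. The $\C_{11}$ conditions then reduce to $c_3=c_1=0$.
\item $g=6$: $a_5=a_4=a_3=a_2=0$, $a_1=c_3$, $a_0=c_1$. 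Since $a_5=0$, the $\C_2$ row is excluded, and the $\C_{13}$ conditions become $c_3^{16}=0$ and $c_1^8=0$.
\end{itemize}
These substitutions are routine bookkeeping.

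For $g=8,9$, which Theorem \ref{thm:2} does not cover, I will analyse $Y^2-Y=f(X)$ directly for $X=\alpha x+\beta$ and $Y=y+\sum\gamma_ix^i$. The equation reads
\[
\Big(\sum\gamma_ix^i\Big)^2-\sum\gamma_ix^i=f(\alpha x+\beta)-f(x).
\]
The left side has only even-degree terms of degree above $n$, and mixed terms in lower degrees. Comparing coefficients from the top down gives $\alpha^{2n+1}=1$ and polynomial conditions on $\beta$.

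\emph{Case $g=8$} ($2n+1=17$). The terms $(x+\beta)^{17}$ and $(x+\beta)^{2^k+1}$ produce only the monomials $x^{2^j}$, $x^{2^k}$, $x$ and constants. Each power-of-two monomial can be absorbed by $\gamma$'s through $\gamma_{2^{j-1}}^2$ chains. The obstruction is therefore a single additive (linearized) polynomial equation in $\beta$, obtained by collapsing the $x^{2^j}$ chain down to $x^1$. I expect its leading part to be $\beta^{256}+\beta=\cdots$, giving $2^8$ solutions, so $U_n\simeq\C_2^8$. The $\alpha$-part requires $c_i\alpha^{i}=c_i$ for each $i\in\{9,5,3\}$. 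Since the exponents $i$ are prime to $17$, a nontrivial $\alpha$ exists only if all $c_i=0$.

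\emph{Case $g=9$} ($2n+1=19$, $n=9$ odd). Theorem \ref{thm:1}(3) gives $U_n=1$. For $\alpha$, the conditions are $c^8\alpha^9=c^8$ and $c^3\alpha=c^3$, so a nontrivial $\alpha$ forces $c=0$. Conversely, when $c=0$ every $\alpha\in\mu_{19}$ works with $\beta=\gamma=0$.

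In every case the $\C_{2n+1}$-part with all $c$'s zero is realized by $x\mapsto\alpha x$, which shows the stated orders are attained.

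The main obstacle is the $g=8$ count. We must show that the linearized polynomial in $\beta$ is separable of degree exactly $2^8$ for every choice of $c_9,c_5,c_3$. This amounts to checking that the top coefficient coming from the $x^{17}$ term is a nonzero constant and that the coefficient of $\beta$ is nonzero. We must also verify that each such $\beta$ extends to a choice of $\gamma$'s, including the constant term, which is solvable since $\gamma_0^2-\gamma_0=\text{const}$ always has a solution. I would first carry this out by hand-tracing the Frobenius chain $x^{16}\to x^8\to\cdots\to x$. If that becomes messy, I would fall back on a Gröbner basis computation in Magma, as for Theorem \ref{thm:2}.
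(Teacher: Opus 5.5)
Your proposal is correct. For $g\le 6$ it is the paper's argument (specialize Theorem~\ref{thm:2}), and your substitutions and reductions are right.

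For $g=8,9$ the routes differ. The paper reads off a Magma Gr\"obner basis: the relations $c_9(\alpha+1)$, $c_5^2(\alpha+1)$, $c_3^4(\alpha+1)$, a degree-$256$ additive polynomial in $\beta$, and $c^{32}(\alpha+1)$. You instead compare coefficients by hand, using that $(\alpha x+\beta)^{2^k+1}$ contains only the monomials $x^{2^k+1}$, $x^{2^k}$, $x$ and $1$. Your route goes through and is fully checkable. With $\alpha=1$, the even-degree comparisons give
$\gamma_8^2=\beta$, $\gamma_4^2=c_9\beta+\gamma_8$, $\gamma_2^2=c_5\beta+\gamma_4$ and $\gamma_1^2=c_3\beta+\gamma_2$.
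The $x^1$ comparison gives $\gamma_1=\beta^{16}+c_9\beta^8+c_5\beta^4+c_3\beta^2$. Raising it to the $16$th power and substituting the chain yields exactly the paper's polynomial
$\beta^{256}+c_9^{16}\beta^{128}+c_5^{16}\beta^{64}+c_3^{16}\beta^{32}+c_3^8\beta^8+c_5^4\beta^4+c_9^2\beta^2+\beta$.
Its linear coefficient is $1$, so it is separable for every $(c_9,c_5,c_3)$. Your version gives a computer-free proof and explains why $\#U_8=2^8$ uniformly. The paper's version has the advantage of being uniform with its $n\le 6$ computations.

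Two steps you assert without derivation should be written out.
\begin{enumerate}
\item[(a)] For $g=8$, the conditions $c_i(\alpha^i-1)=0$ need the $x^9$, $x^5$, $x^3$ coefficients of $\gamma^2+\gamma$ to vanish. This holds for every $\beta$ because $\deg\gamma\le 8$, $\gamma_5=0$ (from $x^{10}$), and $\gamma_6=0$ then $\gamma_3=0$ (from $x^{12}$, then $x^6$).
\item[(b)] For $g=9$, Theorem~\ref{thm:1}(3) only treats $\alpha=1$. To obtain $c^8(\alpha^9-1)=0$ and $c^3(\alpha-1)=0$ for arbitrary $\alpha$, first show that $\beta=0$ for every $\alpha$. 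This follows from the $x^{17}$ coefficient, which reads $\alpha^{17}\beta^2=0$. The power-of-two chain then forces $\gamma_8=\gamma_4=\gamma_2=\gamma_1=0$, and $x^{18}$ forces $\gamma_9=0$.
\end{enumerate}
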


A naive hyperelliptic analogue of Oort conjecture would predict that
any generic supersingular hyperelliptic curve of genus $\ge 2$ has the automorphism group $\{\pm 1\}$. However
from this theorem this is false in characteristic $2$ if $g$ is $2,4,8$. 
A reasonable refinement of the above conjecture would be the following:
\begin{Conj}
The automorphism group of a generic supersingular hyperelliptic curve of genus $g$ in characteristic $2$ is $\{\pm 1\}$ if $g$ is not a power of $2$. More strongly one may expect that this would hold even if we replace ``if" by ``if and only if".
\end{Conj}

This paper is organized as follows.
In Section 2, we review a description of hyperelliptic curves in characteristic $2$ which are of $2$-rank $0$, and a basic fact on their automorphism groups.
In Section 3, we prove the three  theorems in Introduction.
In Section 4, we present a summary of this paper and discuss the issues to be addressed in future work.

\subsection*{Acknowledgments}
The first author would like to express his deepest gratitude to Professor Harashita, his supervisor, for the generous guidance and valuable advice provided throughout the preparation of this paper. The authors are also grateful to the members of the Harashita Laboratory for their insightful comments during seminars and discussions. They also thank the referees for their comments and remarks.

\section{Preliminaries}
In this section, we review the basics of hyperelliptic curves, which are of $2$-rank $0$, and their automorphism groups.

\subsection{Hyperelliptic curves of $2$-rank zero}\label{sec:hyperelliptic}
Let $K$ be an algebraically closed field of characteristic $2$.
Let $C$ be a hyperelliptic curve over $K$.
The function field of $C$ is a quadratic extension of the field $K(x)$ of rational functions, say $K(x)[y]$ with $y^2+a(x)y=b(x)$ for $a(x), b(x) \in K(x)^\times$.
By replacing $y$ by $a(x)y$, we see that $C$ is isomorphic to an Artin-Schreier curve
\[
y^2 - y = f(x)
\]
for $f(x) \in K(x)$. Assume that the $2$-rank of $C$ is zero.
Then it follows from \cite[Lem.\ 2.6]{Pries} that $f(x)$ has a single pole on the projective line ${\mathbb P}^1$. By a coordinate change, one may assume that the pole is at the point $P_\infty$ of infinity, in which case $f(x) \in K[x]$. By replacing $y$ by $y+h$ for some polynomial $h\in K[x]$, we may assume that $f(x)$ does not contain any even-degree terms.
Hence we conclude that every hyperelliptic curve of $2$-rank $0$ is realized as
\begin{equation}\label{eq:OurCurve}
y^2-y=x(x^{2n}+a_{n-1}x^{2(n-1)}+\cdots +a_1x^2+a_0)
\end{equation}
for $a_i\in K$. Conversely any nonsingular curve defined as above
is hyperelliptic curves of genus $n$ and $2$-rank zero (cf.\ \cite[Lem.\ 2.6]{Pries}).

\subsection{Automorphisms}
Let $C$ be a hyperelliptic curves of $2$-rank 0, say defined as in \eqref{eq:OurCurve}.

\begin{Prop}\label{prop:auto}
The automorphisms of the curve $C$ are only those induced by coordinate transformations \eqref{eq:CoordinateTransform}. 
\end{Prop}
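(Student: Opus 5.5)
Every automorphism $\varphi$ of $C$ commutes with the hyperelliptic involution $\sigma$, so it descends to an automorphism of $C/\langle\sigma\rangle\simeq\mathbb{P}^1$. The plan is therefore to control first the induced action on the $x$-line and then the action on $y$. We may assume $n\ge 1$, so that $C$ has genus $n\ge 1$. The case $n=1$ (genus one) needs a side remark, because there the hyperelliptic involution is not canonical. Since the reduced automorphism group only concerns automorphisms normalizing $\langle\sigma\rangle$, the statement should be read for those; for $g\ge2$ the involution is unique and central, so every automorphism qualifies.

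\textbf{Step 1: fix the point at infinity.} The induced map on $\mathbb{P}^1$ is a Möbius transformation. The cover $C\to\mathbb{P}^1$ is ramified only over $P_\infty$, because $f$ is a polynomial; indeed, for a $2$-rank zero Artin--Schreier curve the branch locus is the pole set of $f$, which is a single point (\cite[Lem.\ 2.6]{Pries}). An automorphism of $C$ commuting with $\sigma$ permutes the branch points of the quotient map, so the induced Möbius map fixes $\infty$. Hence it is affine, $x\mapsto \alpha x+\beta$ with $\alpha\ne0$.

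\textbf{Step 2: determine the action on $y$.} Put $X=\alpha x+\beta$. Now $\varphi^*$ maps $K(x,y)$ to itself and sends $K(x)$ to $K(x)$ via $x\mapsto X$. Writing $Y:=\varphi^*(y)=r(x)+s(x)y$ with $r,s\in K(x)$, we need $Y^2-Y=f(X)$. Substituting $y^2=y+f(x)$ and comparing coefficients of $1$ and $y$ in the basis $\{1,y\}$ over $K(x)$ gives
\[
s^2-s=0,\qquad r^2-r+s^2f(x)=f(X).
\]
So $s=1$ (since $s\neq0$ for an automorphism), and $r^2-r=f(X)-f(x)$.

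\textbf{Step 3: show $r$ is a polynomial of degree $\le n$.} This is the main point. The right-hand side is a polynomial. If $r$ had a pole at a finite point, then $r^2$ would have a pole of twice the order there while $r$ has a pole of smaller order, so $r^2-r$ would have a pole. Hence $r\in K[x]$. Comparing degrees, $2\deg r=\deg(f(X)-f(x))\le 2n+1$, and since the left side has even degree, $\deg r\le n$. Thus $r=\gamma_nx^n+\dots+\gamma_0$. Since $Y=y+r(x)$ is the same as $y\mapsto y+r(x)$ up to the sign convention (the sign is irrelevant in characteristic $2$), $\varphi$ has exactly the form \eqref{eq:CoordinateTransform}.

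The subtle step is Step 1, namely justifying that $\varphi$ normalizes $\langle\sigma\rangle$ and that $\infty$ is the unique branch point. The first holds by uniqueness of the $g^1_2$ for genus $\ge2$. The second follows because the different is supported only above the poles of $f$.
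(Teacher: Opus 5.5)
Your proposal is correct and follows essentially the same route as the paper. Uniqueness of the $g^1_2$ (Hartshorne IV.5.3) makes the automorphism descend to $\mathbb{P}^1$ fixing the unique branch point $\infty$, so $x\mapsto\alpha x+\beta$; then writing $y\mapsto s\,y+r$ and comparing coefficients against the basis $\{1,y\}$ forces $s=1$ and $r^2-r=f(X)-f(x)$.

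The differences are refinements rather than a different route. You work in the function field and prove that $r$ is a polynomial by a pole-order argument and that $\deg r\le n$ by a degree count, whereas the paper uses the affine coordinate ring and calls the degree bound clear. Your genus-one caveat is also well taken, since the cited uniqueness of the degree-two map requires $g\ge 2$.
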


\begin{proof}
Since 
the morphism $\pi:C\rightarrow \bP^1$ of degree two 
is unique up to isomorphism by \cite[Chap.\ IV, Prop.\ 5.3]{Hartshorne},
for an automorphism $\psi$ of $C$
there exists an automorphism $\varphi$ of $\bP^1$
that makes the diagram commute
\[
  \begin{diagram}
    \node{C} \arrow{e,t}{\psi} \arrow{s,l}{\pi} \node{C} \arrow{s,r}{\pi} \\
    \node{\bP^1} \arrow{e,t}{\varphi} \node{\bP^1.}
  \end{diagram}
\]
Since the point $P_\infty$ at infinity of ${\mathbb P}^1$ is the unique branch point of $\pi$, the point $P_\infty$ is not moved by $\varphi$.
Hence $\psi$ induces an automorphism of $C\backslash \pi^{-1}(P_\infty)$.
Let $x$ and $X$ be the usual $x$-coordinate of ${\mathbb P}^1$
of the source and the target of $\varphi$.
Since $\varphi$ stabilizes $P_\infty$, 
$\varphi$ sends $x$ to $\alpha X+\beta$.
Write
$f(x):=x(x^{2n}+a_{n-1}x^{2(n-1)}+\cdots +a_1x^2+a_0)$.
The coordinate ring of $C\backslash \pi^{-1}(P_\infty)$ is
\[
K[x,y]/(y^2-y-f(x)).
\]
The associated isomorphism $\psi^*$ of coordinate rings

\[
\psi^*:K[x,y]/(y^2-y-f(x)) \longrightarrow K[X,Y]/(Y^2-Y-f(X))
\]
sends $x$ to $\alpha X+\beta$ and $y$ to $h_1 Y+h_2 $ for some $h_1, h_2\in K[X],\ h_1\neq0$.
It sends $y^2-y-f(x)$ to
\[
(h_1 Y+h_2 )^2-(h_1 Y+h_2 )-f(\alpha X+\beta) =
(h_1^2-h_1)Y+h_1^2f(X) -f(\alpha X+\beta)+ h_2^2-h_2
\]
in $K[X,Y]/(Y^2-Y-f(X))$, which is zero.
If $h_1^2=h_1$ did not hold, then $Y$ would be written only by $X$, which contradicts that $\pi$ is of degree $2$. Hence we have $h_1=1$ as $h_1 \neq 0$. It is clear that the degree of $h_2$ has to be less than or equal to $n$. 
\end{proof}

\section{Proof}
In this section, we prove the theorems presented in Section 1. 
The notation introduced in Section 1 will be used freely throughout this section.

\subsection{Proof of Theorem \ref{thm:1}}
Recall that we study a nonsingular curve of the form
\[
    C:y^2-y=f(x):=x(x^{2n}+a_{n-1}x^{2(n-1)}+\cdots +a_1x^2+a_0)
\]
for $a_i\in K$ and that an element of ${\rm Aut}(C)$ is of the form
\[
(\alpha, \beta; \gamma_n, \ldots, \gamma_0)
\]
for some $\alpha, \beta, \gamma_0,\ldots, \gamma_n\in K$
which denotes a coordinate-change
\begin{equation}\label{eq:Sec3-CoordinateTransform}
\left\{
\begin{array}{l}
x \mapsto X:=\alpha x+\beta, \\
y \mapsto Y:=y+\gamma_nx^n+\gamma_{n-1}x^{n-1}+\cdots +\gamma_1x+\gamma_0
\end{array}
\right.
\end{equation}
and satisfies $Y^2-Y=f(X)$.
We now begin the proof of Theorem \ref{thm:1}.
\begin{proof}
\begin{enumerate}
\item[(1)]
What we want to show is that
\[
\mbox{Ker}\tau_n=\{(\alpha, \beta; \gamma_n, \ldots, \gamma_0)\in \mbox{Aut}(C)\mid (\alpha, \beta)=(1,0)\}\]
is generated by the hyperelliptic involution $\sigma$.
Recall $\sigma=(1, 0; 0, \ldots, 0, 1)$. 
We see that $(\alpha, \beta)=(1,0)$ implies $\gamma_n=\cdots =\gamma_1=0$ and $\gamma_0=0, 1$, by examining the $x^{2i}$-coefficient  of $Y^2-Y=f(X)$
 with $X$ and $Y$ as in \eqref{eq:Sec3-CoordinateTransform} for $i=0,\ldots,n$.
 Hence we have ${\rm Ker}\tau_n= \langle \sigma \rangle \simeq \C_2$ and therefore 
${\rm RA}(C) = {\rm Aut}(C)/{\rm Ker}\tau_n \simeq {\rm Im}\tau_n.$

\item[(2)]
For two elements $(\alpha, \beta), (A, B)\in {\rm Im}\tau_n$, their product in ${\rm Im}\tau_n$ is given by 
\[
(\alpha, \beta)\cdot (A, B)=(\alpha A, \alpha B +\beta). 
\]
The first isomorphism ${\rm Im}\tau_n \simeq \left\{
\left. \begin{pmatrix}
    \alpha & \beta \\
    0 & 1
\end{pmatrix}
\ \right| \  (\alpha, \beta)\in {\rm Im}\tau_n
\right\}$ is obtained by comparing with the product of the associated matrices. To show the second isomorphism, it suffices to show that $U_n:=\left\{
\left. \begin{pmatrix}
    1 & \beta\\
    0 & 1
\end{pmatrix}
\ \right| \ (1, \beta)\in {\rm Im}\tau_n
\right\}$ is a normal subgroup of ${\rm Im}\tau_n$, that a complement of $U_n$ in ${\rm Im}\tau_n$ is isomorphic to $\C_{\#{\rm Im}\rho_n}$, and that a complement of $U_n$ exists in ${\rm Im}\tau_n$. We consider the group homomorphism
 \[
{\rm Im}\tau_n \to \mu_{2n+1}
\]
sending $\displaystyle \begin{pmatrix}
            \alpha & \beta \\
            0 & 1
        \end{pmatrix}$ to $\alpha$. Since the kernel of this homomorphism is $U_n$, we have $U_n\triangleleft {\rm Im}\tau_n$. Also, since the image of this homomorphism is a cyclic group of order $\#{\rm Im}\rho_n$, by the fundamental theorem on homomorphisms we have ${\rm Im}\tau_n/U_n\simeq \C_{\#{\rm Im}\rho_n}$. Here, the complement of $U_n$ in ${\rm Im}\tau_n$ is isomorphic to ${\rm Im}\tau_n/U_n$. Finally, we show that $U_n$ has a complement in ${\rm Im}\tau_n$. From the Schur-Zassenhaus Lemma (cf. \cite[Chap.7, Thm.7.41]{Rotman}), it is sufficient to show that the orders of $U_n$ and ${\rm Im}\tau_n/U_n$ are coprime. For any $(1, \beta)\in {\rm Im}\tau_n$, since $(1, \beta)\cdot (1, \beta)=(1, 0)=e$ holds, the order of any nontrivial element of $U_n$ is $2$. Therefore, $U_n$ is either the trivial group $\{e\}$ or an elementary abelian $2$-group, and its order is found to be a power of $2$. On the other hand, since the order of ${\rm Im}\tau_n/U_n\simeq \C_{\#{\rm Im}\rho_n}$ is a divisor of $2n+1$, that is, an odd number, the orders of $U_n$ and its complement in ${\rm Im}\tau_n$ are coprime. From the above, the second isomorphism was shown. From the proof so far and Theorem \ref{thm:1} (1), there exists a non-negative integer $\ell$  such that ${\rm RA}(C)\simeq \C_2^{\ell}\rtimes \C_{\#{\rm Im}\rho_n}$. 
        
\item[(3)] We determine
\[
{\rm Ker} \rho_n =\{(\alpha, \beta; \gamma_n, \ldots, \gamma_0)\mid \alpha=1\},
\]
when $n$ is odd with $n\ge 3$. The $x^\ell$-coefficient $c(\ell)$ 
of the right hand side of the equation $Y^2-Y=\sum_{\ell = 0}^{2n+1}c(\ell)X^\ell$ obtained by applying the coordinate transformation \eqref{eq:Sec3-CoordinateTransform} to the curve $C$ is 
\[
c(\ell)=\binom{2n+1}{\ell}\alpha^\ell\beta^{2n+1-\ell}+\sum_{i=\lfloor \frac{\ell}{2} \rfloor}^{n-1}a_i\binom{2i+1}{\ell}\alpha^\ell\beta^{2i+1-\ell}+\gamma_{\frac{\ell}{2}}^2+\gamma_\ell,
\]
where we put $\gamma_{\ell} = 0$ for $\ell>n$ and $\gamma_{\frac{\ell}{2}} = 0$ if $\frac{\ell}{2}\notin\mathbb{N}$. The following equation is a necessary and sufficient condition for the coordinate transformation \eqref{eq:Sec3-CoordinateTransform} to be an automorphism. 
\begin{equation}\label{eq:ConditionAut}
    c(\ell)=
        \begin{cases}
        1 & (\ell=2n+1) \\
        0 & (\ell:{\rm even}) \\
        a_{\frac{\ell-1}{2}} & (\ell:\text{odd and } \ell \ne 2n+1)
        \end{cases}
\end{equation}
Let $n\ge 3$ be an odd integer. To consider the kernel of $\rho_n$, we assume $\alpha=1$. 
Since
\[
c(2n-1) = \binom{2n+1}{2n-1}\beta^2+a_{n-1}\binom{2n-1}{2n-1}\beta^0 = \beta^2+a_{n-1},
\]
the equation \eqref{eq:ConditionAut} reads $\beta^2+a_{n-1}=a_{n-1}$. Hence we obtain $\beta=0$. Also, we have $\gamma_n=0$ and $\gamma_0=0, 1$ from $c(2n)=\beta+\gamma_n^2=0$ and $c(0)=\gamma_0^2+\gamma_0=0$. 
Let $m$ be an integer with $1\leq m\leq n$.
By $\beta=0$, we have
\[
\begin{split}
c(2m-1)&=\binom{2n+1}{2m-1}\beta^{2(n-m+1)}+\sum_{i=m-1}^{n-1}a_i\binom{2i+1}{2m-1}\beta^{2(i-m+1)}+\gamma_{2m-1}\\
&=a_{m-1}+\gamma_{2m-1}.
\end{split}
\]
The equation \eqref{eq:ConditionAut} reads $a_{m-1}+\gamma_{2m-1}=a_{m-1}$ and therefore
we have $\gamma_{2m-1}=0$. 
Finally, we show that $\gamma_{\ell}=0$ holds for any even number $\ell\ge2$. From $\beta=0$, when $\ell\ge2$ is even, 
\[
c(\ell)=\gamma_{\frac{\ell}{2}}^2+\gamma_{\ell}=0
\]
holds. By substituting $\ell=2$, we see that $\gamma_2=0$, and for even $\ell$ such that $4\le l \le 2n$, it follows inductively that $\gamma_{\ell}=0$. From the above, we have 
\[
{\rm Ker}\rho_{n}
=\{(1,0;0,\ldots,0,0),(1,0;0,\ldots,0,1) \}
= \langle \sigma \rangle. 
\]

\item[(4)]
Since ${\rm Aut}(C)/{\rm Ker}\rho_n\simeq {\rm Im}\rho_n$, 
if $\#{\rm Ker}\rho_n$ and $\#{\rm Im}\rho_n$ are coprime, then ${\rm Aut}(C)\simeq {\rm Ker}\rho_n\rtimes {\rm Im}\rho_n$ that we aim to prove here
follows from the Schur-Zassenhaus Lemma.
Since ${\rm Im}\rho_n$ is a subgroup of the finite cyclic group $\mu_{2n+1}$, we have ${\rm Im}\rho_n\simeq \C_{\#{\rm Im}\rho_n}$. It remains to show that $\#{\rm Ker}\rho_n$ and $\#{\rm Im}\rho_n$ are coprime. 
By Theorem \ref{thm:1} (1), (2), there exists a non-negative integer $\ell$ such that 
\[
\#{\rm Aut}(C)=\#{\rm Ker}\tau_n\cdot \#{\rm Im}\tau_n 
=\#\C_2\cdot \#U_n\cdot \#\C_{\#{\rm Im}\rho_n} 
=2\cdot 2^{\ell}\cdot \#{\rm Im}\rho_n
\]
holds. Therefore we have
$
\#{\rm Ker}\rho_n=\#{\rm Aut}(C)/\#{\rm Im}\rho_n
=2^{\ell+1}$,
but on the other hand, $\#{\rm Im}\rho_n$ is an odd number because it is a divisor of $2n+1$. 
Hence $\#{\rm Ker}\rho_n$ and $\#{\rm Im}\rho_n$ are coprime. 
\end{enumerate}
\end{proof}

\subsection{Proof of Theorem \ref{thm:2}}
In this subsection, we prove Theorem \ref{thm:2}, namely, for $n\leq 6$ we determine the group structure of the reduced automorphism group ${\rm RA}(C)$ of
\[
C:\quad y^2-y =x(x^{2n}+a_{n-1}x^{2(n-1)}+\cdots +a_1x^2+a_0)
\]
for $a_i\in K$.
For the proof, we used the computational software Magma (cf. \cite{MAGMA}). 
\begin{proof}
From the expression for the coefficients of the curve used in the proof of Theorem \ref{thm:1} (3), $c(\ell)$, it can be seen that once $\alpha$ and $\beta$ are determined, $\gamma_n, \ldots, \gamma_1$ are determined uniquely, and $\gamma_0$ is determined in two ways. From this fact and Theorem \ref{thm:1} (2),
in order to determine the group structure of the reduced automorphism group ${\rm RA}(C)$, it is sufficient to find the number of values that $\alpha$ and $\beta$ can take. For each $n=1,\ldots, 6$, we derived the number of possible values of $\alpha$ and of $\beta$ (that is, the order of ${\rm Im}\rho_n$ and of $U_n$), as well as their conditional formulas, using the computational software Magma (cf. \cite{MAGMA}). 
More precisely, we used Magma to find the Gr\"obner basis of the ideal generated by polynomials that are necessary and sufficient for the coordinate transformation of the curve $C$ to be an automorphism, and analyzed the obtained output. Considering the fiber structure ${\rm Im} \tau_n \to {\rm Im} \rho_n$: $(\alpha,\beta) \mapsto \alpha$, we used a lexicographic order satisfying $\beta \succ \alpha$ as the monomial order to compute
the Gr\"obner basis above. 

\begin{algorithm}[htbp]
\begin{algorithmic}[1]
\renewcommand{\algorithmicrequire}{\textbf{Input:}}
\renewcommand{\algorithmicensure}{\textbf{Output:}}
\algnewcommand{\IIf}[1]{\State\algorithmicif\ #1\ \algorithmicthen}
\algnewcommand{\EndIIf}{\unskip\ \algorithmicend\ \algorithmicif}
\caption{\ Computing Gr\"obner basis}\label{Cariter-Manin}
\label{carier-manin:algrm}
\Require A natural number $n\in \mathbb{N}$
\Ensure The Gr\"obner basis $G$ of the ideal generated by polynomials that are necessary and sufficient for the coordinate transformation of the curve to be an automorphism
\State Set the polynomial $F:=y^2-y-x(x^{2n}+\sum_{i=0}^{n-1}a_{i}x^{2i})$ and the coordinate transformation $\varphi:=(x\mapsto \alpha x+\beta,\ y\mapsto y+\sum_{i=0}^n \gamma_ix^i)$
\State Compute $H:=F-\varphi(F)$
\State Set $L:=\emptyset$ and the set $M$ of monomials contained in the polynomial $H$
\For{the monomial $m$ in $M$}
\State $L:=L\cup \{{\rm the\ coefficient\ of\ }m\}$
\EndFor
\State Set the ideal $I$ generated by $L$
\State Compute the Gr\"obner basis $G$ of $I$ with respect to the lexicographic order
$\gamma_n \succ \cdots \succ \gamma_0 \succ \beta \succ \alpha \succ a_0 \succ \cdots \succ a_{n-1}$\\
\Return $G$
\end{algorithmic}
\end{algorithm}
If we put $\alpha = 1$, i.e., we replace ``$L:=\emptyset$" by ``$L:=\{\alpha -1 \}$" in the third line of Algorithm \ref{carier-manin:algrm}, the algorithm provides a Gr\"obner basis determining $U_n$.

Here, as an example, the steps of the proof are described concretely for $n=3$. For the curve 
\[
C:y^2-y=x(x^6+a_2x^4+a_1x^2+a_0)
\]
over $K$ and its coordinate transformation 
\[
\left\{
\begin{array}{l}
x \mapsto \alpha x+\beta, \\
y \mapsto y+\gamma_3x^3+\gamma_2x^2+\gamma_1x+\gamma_0, 
\end{array}
\right.
\]
the necessary and sufficient condition for this coordinate transformation to be an automorphism is that the following holds. 
\[
\left\{
\begin{array}{l}
\alpha^7=1\\
\alpha^6\beta+\gamma_3^2=0\\
\alpha^5\beta^2+a_2\alpha^5=a_2\\
\alpha^4\beta^3+a_2\alpha^4\beta+\gamma_2^2=0\\
\alpha^3\beta^4+a_1\alpha^3+\gamma_3=a_1\\
\alpha^2\beta^5+a_1\alpha^2\beta+\gamma_1^2+\gamma_2=0\\
\alpha\beta^6+a_2\alpha\beta^4+a_1\alpha\beta^2+a_0\alpha+\gamma_1=a_0\\
\beta^7+a_2\beta^5+a_1\beta^3+a_0\beta+\gamma_0^2+\gamma_0=0
\end{array}
\right.
\]
By computing the Gr\"obner basis of the ideal generated by this polynomials using Magma, we obtain the following output. 
\[
\left\{
\begin{array}{l}
    \gamma_3 + \alpha^3a_1 + \alpha^3a_2^2 + a_1 + a_2^2 =0\\
    \gamma_2 + \alpha^3a_1^3 + \alpha^3a_1^2a_2^2 + \alpha^3a_1a_2^4 + 
        \alpha^3a_2^6 \\
        \quad + \alpha^2a_0^2 + \alpha^2a_1^3 + \alpha^2a_1a_2^4 + 
        \alpha^2a_2^6 + a_0^2 + a_1^2a_2^2 =0\\
    \gamma_1 + \alpha^3a_1a_2 + \alpha^3a_2^3 + \alpha a_0 + \alpha a_1a_2 + a_0 + 
        a_2^3 =0\\
    \gamma_0^2 + \gamma_0 + \alpha^6a_1^2a_2^3 + \alpha^6a_2^7 + \alpha^3a_1^3a_2 + 
        \alpha^3a_1^2a_2^3 + \alpha^3a_1a_2^5 + \alpha^3a_2^7 + 
        \alpha^2a_1^3a_2 \\
        \quad +\alpha^2a_1^2a_2^3 + \alpha^2a_1a_2^5 + 
        \alpha^2a_2^7 + \alpha a_0a_1^2 + \alpha a_0a_2^4 + \alpha a_1^3a_2 + 
        \alpha a_1a_2^5 + a_0a_1^2 \\
        \qquad + a_0a_2^4 + a_1^3a_2 + a_1^2a_2^3 + 
        a_1a_2^5 + a_2^7 =0\\
    \beta + \alpha a_1^2 + \alpha a_2^4 + a_1^2 + a_2^4 =0\\
    \alpha^7 + 1 =0\\
    \alpha a_0^4 + \alpha a_1^2a_2 + \alpha a_2^{12} + a_0^4 + a_1^2a_2 + a_2^{12} =0\\
    \alpha a_1^4 + \alpha a_2^8 + \alpha a_2 + a_1^4 + a_2^8 + a_2 =0
\end{array}
\right.
\]
We have $\alpha^7=1$ from the $6$th eqation. Also, from $7$th and $8$th equation, if both $a_2^8+a_2+a_1^4=0$ and $a_2^{12}+a_2a_1^2+a_0^4=0$ hold, then $\alpha^7=1$; otherwise, $\alpha=1$ is obtained. This allows us to determine the number of values $\alpha$ can take. Next, assuming $\alpha=1$, we find the number of values $\beta$ can take. We have $\beta=0$ by substituting $\alpha=1$ for the $5$th equation, so the number of values that $\beta$ can take is $1$. In this way, we analyzed the output (Gr\"obner basis) obtained with Magma and proved Theorem \ref{thm:2} by determining the order of ${\rm Im}\rho_n$ and of $U_n$. The source code used is available on \cite{Yamaguchi}. For each $n=1,\ldots,6$, the details are described below. 
\begin{enumerate}
\item[\em $(1)$] \underline{The case of $n=1$:} By substituting $\ell=3$ for $c(\ell)$, we obtain $\alpha^3=1$. Since the formula concerning $\alpha$ is only this formula, $\#{\rm Im}\rho_1=3$ always holds. Next, let $\alpha=1$, and show that $U_n\simeq \C_2^2$, that is, $\beta$ takes $4$ different values. From the calculation results of Magma
, it follows that $\beta^4+\beta=0$, and since this equation does not have a multiple root, it can be seen that $\beta$ takes on $4$ different values. 
\item[\em $(2)$] \underline{The case of $n=2$:}
By substituting $\ell=5$ for $c(\ell)$, we obtain $\alpha^5=1$. Also, we obtain $a_1(\alpha+1)=0$ from the calculation results of Magma. 
Since ${\rm Im}\rho_2$ is a cyclic group of order $1$ or $5$, when $a_1\ne0$ holds, we have $\#{\rm Im}\rho_2=1$, and when $a_1=0$ holds, we have $\#{\rm Im}\rho_2=5$. Next, let $\alpha=1$, and show that $U_n\simeq \C_2^4$, that is, $\beta$ takes $16$ different values. From the calculation results of Magma, 
it follows that 
\[
\beta^{16}+a_1^4\beta^8+a_1^2\beta^2+\beta=0
\]
and since this equation does not have a multiple root, it can be seen that $\beta$ takes on $16$ different values. 
\item[\em $(3)$] \underline{The case of $n=3$:}
By substituting $\ell=7$ for $c(\ell)$, we obtain $\alpha^7=1$. Also, we obtain 
\[
\left\{
\begin{array}{l}
(a_2^8+a_2+a_1^4)(\alpha+1)=0 \\
(a_2^{12}+a_2a_1^2+a_0^4)(\alpha+1)=0
\end{array}
\right.
\]
from the calculation results of Magma. 
Since ${\rm Im}\rho_3$ is a cyclic group of order $1$ or $7$, when $a_2^8+a_2+a_1^4=a_2^{12}+a_2a_1^2+a_0^4=0$ holds, we have $\#{\rm Im}\rho_3=7$, and otherwise, we have $\#{\rm Im}\rho_3=1$. Next, we show that $U_n=\{e\}$. From the proof of Theorem \ref{thm:1} (3), when $n$ is an odd number greater than or equal to $3$, if $\alpha=1$, then $\beta=0$. Therefore $U_n=\{e\}$ holds. 
\item[\em $(4)$] \underline{The case of $n=4$:}
By substituting $\ell=9$ for $c(\ell)$, we obtain $\alpha^9=1$. Also, we obtain 
\[
\left\{
\begin{array}{l}
a_3(\alpha+1)=0 \\
a_2(\alpha+1)=0 \\
a_1^2(\alpha^3+1)=0
\end{array}
\right.
\]
from the calculation results of Magma. 
Since ${\rm Im}\rho_4$ is a cyclic group of order $1, 3$, or $9$, when either $a_3\ne0$ or $a_2\ne0$ holds, we have $\#{\rm Im}\rho_4=1$, when $a_3=a_2=0$ and $a_1\ne0$ holds, we have $\#{\rm Im}\rho_4=3$, and when $a_3=a_2=a_1=0$ holds, we have $\#{\rm Im}\rho_4=9$. Next, let $\alpha=1$, and show that $U_n=\{e\}$ when $a_3\ne0$ holds, and that $U_n\simeq \C_2^6$ when $a_3=0$ holds. 
\begin{enumerate}
    \item[\em $({\rm i})$] \underline{The case of $a_3\ne0$:}
    From the calculation results of Magma, 
    we obtain $a_3\beta=0$. Since $a_3\ne0$, $\beta=0$ holds, so we have $U_n=\{e\}$. 
    \item[\em $({\rm ii})$] \underline{The case of $a_3=0$:}
    From the calculation results of Magma, 
    it follows that
    \[
    \beta^{64}+a_2^8\beta^{32}+a_1^8\beta^{16}+a_1^4\beta^4+a_2^2\beta^2+\beta=0
    \]
    and since this equation does not have a multiple root, it can be seen that $\beta$ takes on $64$ different values. Therefore $U_n\simeq \C_2^6$. 
\end{enumerate}
\item[\em $(5)$] \underline{The case of $n=5$:}
By substituting $\ell=11$ for $c(\ell)$, we obtain $\alpha^{11}=1$. Also, we obtain 
\[
\left\{
\begin{array}{l}
a_3(\alpha+1)=0 \\
(a_4+a_2^4)(\alpha+1)=0 \\
(a_4^8+a_1^2)(\alpha+1)=0 \\
(a_4^{40}+a_4^{18}+a_4a_2^2+a_0^8)(\alpha+1)=0
\end{array}
\right.
\]
from the calculation results of Magma. 
Since ${\rm Im}\rho_5$ is a cyclic group of order $1$ or $11$, when $a_3=a_4+a_2^4=a_4^8+a_1^2=a_4^{40}+a_4^{18}+a_4a_2^2+a_0^8=0$ holds, we have $\#{\rm Im}\rho_5=11$, and otherwise, we have $\#{\rm Im}\rho_5=1$. Next, we show that $U_n=\{e\}$. From the proof of Theorem \ref{thm:1} (3), when $n$ is an odd number greater than or equal to $3$, if $\alpha=1$, then $\beta=0$. Therefore $U_n=\{e\}$ holds. 
\item[\em $(6)$] \underline{The case of $n=6$:}
By substituting $\ell=13$ for $c(\ell)$, we obtain $\alpha^{13}=1$. Also, we obtain $a_5(\alpha+1)=0, a_3(\alpha+1)=0$ from the calculation result of Magma, 
 where we used the grevlex order to terminate the computation. 
Since ${\rm Im}\rho_6$ is a cyclic group of order $1$ or $13$, when $a_5=a_3=0$ does not hold, we have $\#{\rm Im}\rho_6=1$. By performing the calculation with $a_5=a_3=0$, we obtain the following three equations. 
\[
\left\{
\begin{array}{l}
(a_4^4+a_2^2)(\alpha+1)=0, \\
(a_4+a_1^{16})(\alpha+1)=0, \\
(a_4^{24}+a_4^4a_1^8+a_4a_1^4+a_0^8)(\alpha+1)=0. 
\end{array}
\right.
\]
From the considerations so far, it can be seen that the necessary and sufficient condition for $\#{\rm Im}\rho_6=13$ is $a_5=a_3=a_4^4+a_2^2=a_4+a_1^{16}=a_4^{24}+a_4^4a_1^8+a_4a_1^4+a_0^8=0$. Next, we consider the group structure of $U_n$. From the calculation results of Magma with $\alpha=1$, we obtain the following three equations. 
\[
\left\{
\begin{array}{l}
(a_5a_3^4+a_5^{13}+1)\beta=0, \\
t\beta=0, \\
\beta^2+a_5(a_3^2+a_5^6)\beta=0,
\end{array}
\right.
\]
where $t$ is as in \eqref{eq:20260412}.
When the necessary condition $a_5=0$ for $\#{\rm Im}\rho_6=13$ is satisfied, from the third equation, $\beta=0$, that is, $U_n=\{e\}$ holds. Therefore, if $\#{\rm Im}\rho_6=13$ holds, the group structure of ${\rm RA}(C)$ is determined as $\C_{13}$. On the other hand, when $\#{\rm Im}\rho_6=1$ holds (that is, when at least one of $a_5, a_3, a_4^4+a_2^2, a_4+a_1^{16}, a_4^{24}+a_4^4a_1^8+a_4a_1^4+a_0^8$ is nonzero), from the three equations above, if $a_5(a_3^2+a_5^6)\ne0$ and $a_5a_3^4+a_5^{13}+1=t=0$ hold, then $\beta$ takes two possible values, so we have $U_n\simeq \C_2$, otherwise $\beta=0$, so we have $U_n=\{e\}$. 
\end{enumerate}
Regarding the order of the automorphism group ${\rm Aut}(C)$, since $\#{\rm Aut}(C)=2\cdot\#{\rm RA}(C)=2\cdot\#U_n\cdot \#{\rm Im}\rho_n$ holds, the order of ${\rm Aut}(C)$ can be immediately determined from the group structure of ${\rm RA}(C)$. 
\end{proof}

\subsection{Proof of Theorem \ref{thm:3}}
In this subsection, we prove Theorem \ref{thm:3}. That is, we determine ${\rm RA}(C_g^{SZ})$ of Artin-Schreier curves $C_g^{SZ}$ of Scholten-Zhu type.
\begin{proof}
In the case of $g=1, \ldots, 6$, it immediately follows from Theorem \ref{thm:2}. Using a method similar to the proof of Theorem \ref{thm:2}, we show the case $g=8$ and $9$. 
\begin{enumerate}
\item[\em $(1)$] \underline{The case of $g=8$:}
By substituting $\ell=17$ for $c(\ell)$, we obtain $\alpha^{17}=1$. Also, we obtain 
\[
\left\{
\begin{array}{l}
c_9(\alpha+1)=0 \\
c_5^2(\alpha+1)=0 \\
c_3^4(\alpha+1)=0
\end{array}
\right.
\]
from the calculation results of Magma. 
Since ${\rm Im}\rho_8$ is a cyclic group of order $1$ or $17$, when $c_9=c_5=c_3=0$ holds, we have $\#{\rm Im}\rho_8=17$, and otherwise, we have $\#{\rm Im}\rho_8=1$. Next, let $\alpha=1$, and show that $U_n\simeq \C_2^8$, that is, $\beta$ takes $256$ different values. From the calculation results of Magma, it follows that 
\[
\beta^{256}+c_9^{16}\beta^{128}+c_5^{16}\beta^{64}+c_3^{16}\beta^{32}+c_3^8\beta^8+c_5^4\beta^4+ c_9^2\beta^2+\beta=0
\]
and since this equation does not have a multiple root, it can be seen that $\beta$ takes on $256$ different values. 
\item[\em $(2)$] \underline{The case of $g=9$:}
By substituting $\ell=19$ for $c(\ell)$, we obtain $\alpha^{19}=1$. Also, we obtain $c^{32}(\alpha+1)=0$ from the calculation results of Magma. 
Since ${\rm Im}\rho_9$ is a cyclic group of order $1$ or $19$, when $c\ne0$ holds, we have $\#{\rm Im}\rho_9=1$, and when $c=0$ holds, we have $\#{\rm Im}\rho_9=19$. Next, we show that $U_n=\{e\}$. From the proof of Theorem \ref{thm:1} (3), when $n$ is an odd number greater than or equal to $3$, if $\alpha=1$, then $\beta=0$. Therefore $U_n=\{e\}$ holds. 
\end{enumerate}
\end{proof}

\section{Concluding remarks}
In this paper, we considered the nonsingular curve over an algebraically closed field $K$ of characteristic $2$ defined by
\[
    C:y^2-y
    =x(x^{2n}+a_{n-1}x^{2(n-1)}+\cdots +a_1x^2+a_0)
\]
for $a_i \in K$. These are hyperelliptic curves of $2$-rank $0$. We examined the group sturcture and the order of the automorphism group ${\rm Aut}(C)$, as well as the reduced automorphism group ${\rm RA}(C)$. We were able to determine
the group structure of ${\rm RA}(C)$ when $n\leq 6$, but for bigger $n$
we were unable to determine it 
due to computational difficulties. Of course, by improving the algorithm, enhancing the computational environment, and investing a substantial amount of time, it may be possible to obtain similar results for slightly larger values of $n$.
Based on these experiments, we formulated analogues of Oort's conjecture for these families of curves, see Conjectures 1 and 2 in Introduction.

As a future work, we would like to have a theoretical approach toward general results on
the group structure of ${\rm Aut}(C)$ 
as well as the detailed group structure of ${\rm RA}(C)$ for any $n$. 
The remaining problem for odd $n$ is to classify the image of $\rho_n$. 
When $n$ is even, in addition to the above issues, determining the structure of the kernel also becomes a major problem. However
when $n$ is a power of $2$, say $n=2^m$, as far as our experiments indicate, only the following two types of groups appeared as $U_n$. 
\[
U_n\simeq \begin{cases}
        \C_1 &  \text{for specific } (a_i) \text{ chosen in our experiments,} \\
        \C_2^{2(m+1)} & \text{if } a_i=0 \text{ unless }i \text{ is a power of } 2,
        \end{cases}
\]
where we note ${\rm RA}(C) \simeq U_n\rtimes \C_{\#{\rm Im}\rho_n}$
by Theorem \ref{thm:1}.
However, it is uncertain whether one can assert that only these two cases occur, and we also regard this as an interesting problem, much like Conjectures 1 and 2.


\begin{thebibliography}{99}
\bibitem{MAGMA}W.\ Bosma, J.\ Cannon and C.\ Playoust: \textit{The MAGMA algebra system. I. The user language}, J.\ Symb.\ Comput.\,{\bf 24}, No.\,3-4, 235--265, 1997.

\bibitem{Van Der Geer:rmc}G.\ van der Geer and M.\ van der Vlugt: {\it Reed-Muller codes and supersingular curves. I}, Compos. Math. {\bf 84}, No.\,3, 333--367, 1992.

\bibitem{Van Der Geer:esc}G.\ van der Geer and M.\ van der Vlugt: {\it On the existence of supersingular curves of given genus},  J.\ Reine Angew. Math. {\bf 458}, 53--62, 1994.

\bibitem{Hartshorne}R.\ Hartshorne: {\it Algebraic Geometry},  Graduate Texts in Mathematics {\bf 52}, Springer, 1997.

\bibitem{Ibukiyama}T.\ Ibukiyama: {\it Principal polarizations of supersingular abelian surfaces}, J.\ Math.\ Soc.\ Japan {\bf 72}, No.\,4, 1161--1180, 2020.

\bibitem{KYY}
V.\ Karemaker, F.\ Yobuko and C.-F.\ Yu:
{\it Mass formula and Oort's conjecture for supersingular abelian threefolds},
Adv. Math. {\bf 386}, Article ID 107812, 
(2021).

\bibitem{KY}
V.\ Karemaker and C.-F.\ Yu:
{\it Supersingular Ekedahl-Oort strata and Oort's conjecture}, 
arXiv:2406.19748 [math.NT]

\bibitem{IKO}
T.\ Ibukiyama, T.\ Katsura and F.\ Oort:
{\it Supersingular curves of genus two and class numbers},
Compos. Math. {\bf 57}, 127--152 (1986).

\bibitem{Igusa}
J.\ Igusa:
{\it Arithmetic variety of moduli for genus two}, 
Ann.\ Math.\ (2) {\bf 72}, 612--649 (1960).

\bibitem{LR}
R.\ Lercier and C.\ Ritzenthaler:
{\it Hyperelliptic curves and their invariants: Geometric, arithmetic and algorithmic aspects},
Journal of Algebra,
Volume {\bf 372},
2012, 
595--636,


\bibitem{Pries} R.\ Pries and H.\ J.\ Zhu: {\it The $p$-rank stratification of Artin-Schreier curves}, Ann.\ Inst.\ Fourier {\bf 62}, No.\,2, 707--726, 2012.

\bibitem{Rotman}J.\ J.\ Rotman: {\it An Introduction to the Theory of Groups}, Graduate Texts in Mathematics {\bf 148}, Springer, 1995.

\bibitem{SZ:hyperell}
J.\ Scholten and H.\ J.\ Zhu:
{\it Hyperelliptic curves in characteristic $2$},
Int.\ Math.\ Res.\ Not.\ 2002, No.\,17, 905--917 (2002).

\bibitem{Scholten} J.\ Scholten and H.\ J.\ Zhu: {\it Families of supersingular curves in characteristic $2$}, Math.\ Res.\ Lett.\ {\bf 9}, No.\,5-6, 639-650, 2002.

\bibitem{Viehmann} 
E.\ Viehmann:
{\it Oort's conjecture on automorphisms of generic supersingular abelian varieties},
arXiv:2603.06033 [math.AG]




\bibitem{Yamaguchi}\texttt{https://github.com/Kohtaro-Yamaguchi/code_Magma.git}
\end{thebibliography}
\end{document}